\RequirePackage{fix-cm}
\documentclass[smallextended]{svjour3}
\smartqed

\usepackage{amsmath,amssymb,amsfonts,latexsym}

\newcommand{\bbr}{\mathbb{R}}
\newcommand{\bbp}{\mathbb{P}}
\newcommand{\bbe}{\mathbb{E}}
\newcommand{\Ind}{\mathbf{1}}
\newcommand{\diff}{\mathop{}\!\mathrm{d}}
\newcommand{\cF}{\mathcal{F}}
\newcommand{\pup}{\psi_{\uparrow}}
\newcommand{\pdn}{\psi_{\downarrow}}

\journalname{}

\begin{document}

\title{Smoothness of the survival probability in models with a random environment: the annuity and mixed cases}
\titlerunning{Smoothness of the survival probability}

\author{Platon Promyslov}
\authorrunning{P. Promyslov}

\institute{Platon Promyslov \at
           Faculty of Mechanics and Mathematics, Lomonosov Moscow State University, Moscow, Russia \\
           \email{platon.promyslov@gmail.com}}

\date{\today}

\maketitle

\begin{abstract}
We consider the ruin problem for an insurance company investing its whole
reserve in a risky asset whose parameters depend on a Markov random
environment. Using the Green's function method we prove the $C^2$-smoothness
of the survival probability in the model with annuity payments and, under an
additional condition, in the model with two-sided jumps, which makes a
rigorous derivation of the corresponding equations possible. The approach
weakens the assumptions on the jump distribution: for annuities it suffices
that it be an arbitrary probability measure on the positive half-line, and in
the mixed model that it have no atoms on the negative half-line. The latter
condition cannot be dropped: in the presence of an isolated atom the second
derivative has a jump, whose size is computed explicitly.
\keywords{Survival probability \and Risky investments \and
Integro-differential equations \and Random environment \and Smoothness \and
Green's function \and Annuity payments}
%\subclass{60G44 \and 91G05 \and 45J05}
\subclass{91G05 \and 60J60 \and 60K37 \and 45J05}
\end{abstract}

\section{Introduction}
\label{s1}

The central theoretical problem in ruin problems with investments in risky
assets and switching market regimes, see e.g.\
\cite{AntipovKabanov,DiMasi,Paulsen}, is to justify the smoothness of the
survival probability, which is needed for a correct derivation and use of the
integro-differential equations (IDEs).

In the recent paper \cite[Theorem~1]{AntipovKabanov} smoothness was proved for
the model with positive drift of the capital and downward jumps, the non-life
insurance case, under the assumption that the jump distribution has a twice
continuously differentiable density $f$ with $f',f''\in L^1$. In
\cite[Remark~1]{AntipovKabanov} it was conjectured that the annuity and the
mixed-jump cases can be treated in the same way with minor changes.

We show below that this conjecture needs a refinement. Reversing the direction
of the jumps and the sign of the drift changes the ruin mechanism: instead of
crossing the barrier by a jump, zero is reached continuously. For annuities we
prove smoothness for an arbitrary probability measure of the jumps on
$(0,\infty)$. In the mixed model, on the contrary, smoothness may fail: if the
jump distribution has an isolated atom on the negative half-line and
$\Phi_i(0+)>0$, which holds, for instance, for a positive drift, then the
second derivative has a jump at the corresponding point,
see Proposition~\ref{prop:counterexample}. Smoothness is restored if for every
regime the jump distribution has no atoms on $(-\infty,0)$ or $\Phi_i(0+)=0$,
see Theorem~\ref{thm:mixed_case}. Thus the method developed here allows one to
drop the smoothness assumptions on the density imposed in
\cite[Theorem~1]{AntipovKabanov}: for annuities atoms, singular components and
an infinite expectation are all allowed, because the integral term is never
differentiated.

\section{The model}
\label{s2}

All random objects are defined on a probability space
$(\Omega,\cF,\mathbf{F},\bbp)$ with a filtration
$\mathbf{F}=(\cF_t)_{t\ge0}$. Let $\theta=(\theta_t)_{t\ge0}$ be a homogeneous
Markov process with values in a finite set $E=\{1,\dots,K\}$ and intensity
matrix $\Lambda=(\lambda_{ij})$, describing the changes of the economic
environment. Here $\lambda_{ij}\ge0$ for $i\ne j$, and the rows of $\Lambda$
sum to zero.

The capital $X_t$ of the insurance company with initial value $u>0$ is
invested in a risky asset $S_t$. Its dynamics is given by the stochastic
differential equation
\begin{equation}
    \diff X_t = X_t\frac{\diff S_t}{S_t}+\diff P_t
    = X_t\bigl(a_{\theta_t}\diff t+\sigma_{\theta_t}\diff W_t\bigr)+\diff P_t,
\end{equation}
where $W_t$ is a standard Wiener process and $P_t$ is a jump process
describing the flow of premiums and claims,
\begin{equation}
    \label{eq:dP}
    \diff P_t = c_{\theta_t}\diff t+\int_{\bbr} z\,\pi(\diff t,\diff z).
\end{equation}
Here $c_i$ is the drift coefficient in regime $i\in E$ and
$\pi(\diff t,\diff z)$ is the Poisson random measure of jumps. Its compensator
is $\Pi(\diff t,\diff z)=\alpha_{\theta_t}F_{\theta_t}(\diff z)\diff t$, where
$\alpha_i>0$ is the intensity of the insurance events in regime $i\in E$ and
$F_i(\diff z)$ is the probability distribution of the jump size $z$ in regime
$i$, defined on $\bbr\setminus\{0\}$. The processes $W$, $\pi$ and $\theta$ are
assumed to be mutually independent; for Poisson measures and compensators see
\cite{Sato}. Since $\alpha_i<\infty$, the integral in \eqref{eq:dP} is a finite
sum and no moment assumptions on $F_i$ are needed.

In what follows we assume, for every regime $i\in E$, the non-degeneracy of the
diffusion, $\sigma_i>0$, and the non-triviality, $\Phi_i\not\equiv0$. In the
scalar case the condition $a_i>\sigma_i^2/2$ is sufficient for the latter; it
is equivalent to the integrability of the scale function at infinity, see
\cite{Kabanov} for the annuity model.

The ruin time is $\tau=\inf\{t>0:\ X_t\le0\}$. The main object of study is the
vector of survival probabilities
$\Phi(u)=(\Phi_1(u),\dots,\Phi_K(u))^\top$, where
$\Phi_i(u)=\bbp(\tau=\infty\mid X_0=u,\ \theta_0=i)$. Note first that every
$\Phi_i$ is non-decreasing. Indeed, if $u\le u'$, then for the difference of
the processes built from the same $W$, $\pi$ and $\theta$ we have
$X^{u'}-X^{u}=(u'-u)+\bigl(X^{u'}-X^{u}\bigr)\cdot R$, whence
$X^{u'}_t-X^{u}_t=(u'-u)\mathcal E^i_t>0$, where $\mathcal E^i$ is the
stochastic exponential of $R$. Therefore $\tau^{u}\le\tau^{u'}$. In particular
the $\Phi_i$ are Borel functions and the limits $\Phi_i(0+)\in[0,1]$ exist.
Put $\tilde\lambda_i:=\sum_{j\ne i}\lambda_{ij}+\alpha_i>0$ and
$$
    \mathcal H_i(\Phi)(u) := \sum_{j\ne i}\lambda_{ij}\Phi_j(u)
    + \alpha_i\!\int_{\bbr}\!\Phi_i(u+z)\Ind_{\{u+z>0\}}F_i(\diff z),
$$
so that $0\le\mathcal H_i(\Phi)\le\tilde\lambda_i$.

Depending on the sign of $c_i$ and the support of $F_i$ one distinguishes
three cases. In non-life insurance $c_i>0$, the support of $F_i$ lies in
$(-\infty,0)$, and ruin occurs by a jump. In life insurance, that is, for
annuity payments, $c_i<0$, the support lies in $(0,\infty)$, and ruin is
possible only through a continuous passage to zero. In the mixed model
$c_i\in\bbr$ is arbitrary and the support lies in $\bbr\setminus\{0\}$, so
that jumps of both signs are possible.

\section{Auxiliary results}
\label{s3}

\begin{lemma}[Asymptotics of the solutions]
\label{lem:asymptotics}
Let $\sigma>0$, $c<0$, $\lambda>0$ and
$\mathcal L^0\psi:=\sigma^2u^2\psi''/2+(au+c)\psi'$. Consider the equation
$\mathcal L^0\psi-\lambda\psi=0$. It has a fundamental system of solutions
$\{\pup,\pdn\}$ with $\pup(0+)=0$ and $\pdn$ bounded at infinity, and as
$u\to0+$
\begin{equation}
    \label{eq:psi_asympt}
    \pup(u)=C_\uparrow\,u^{\beta}\exp\left(-\frac{\kappa}{u}\right)
    \bigl(1+O(u)\bigr),\quad \kappa=\frac{2|c|}{\sigma^2},\quad
    \beta=2-\frac{2a}{\sigma^2},
\end{equation}
where $C_\uparrow>0$, while $\pdn(0+)$ is finite and non-zero. As
$v\to\infty$ one has $\pdn(v)\sim C_\downarrow v^{\rho_-}$, where $\rho_-<0$
is the smaller root of
$$
    \tfrac12\sigma^2\rho^2+\bigl(a-\tfrac12\sigma^2\bigr)\rho-\lambda=0 .
$$
The solutions may be chosen positive; then $\pup$ is strictly increasing,
$\pdn$ is strictly decreasing, they are linearly independent, the Wronskian
$\mathcal W=\pup\pdn'-\pup'\pdn$ is strictly negative, and by Liouville's
formula
\begin{equation}
    \label{eq:liouville}
    \mathcal W(v)=C_{\mathcal W}\,v^{-2a/\sigma^2}\,e^{2c/(\sigma^2v)},
    \qquad C_{\mathcal W}<0 .
\end{equation}
For $c\ge0$ the behaviour of the solutions is different. Zero is inaccessible,
$\pdn(0+)=+\infty$, and the role of $\pup$ is played by the solution bounded at
zero. For $c>0$ such a solution is unique up to a factor and satisfies
$\pup(0+)>0$, while every solution not proportional to it grows like
$u^{\beta}e^{2c/(\sigma^2u)}$. For $c=0$ the equation is of Euler type and
$\pup(u)=u^{\rho_+}$, $\pdn(u)=u^{\rho_-}$, where $\rho_+>0$ is the larger root
of the same equation. Formula \eqref{eq:liouville} and the negativity of the
Wronskian hold for every sign of $c$.
\end{lemma}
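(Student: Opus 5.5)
We would pass to the variable $x=1/u$, which sends the singular point $u=0$ to $x=+\infty$. Since $\psi'=-x^2\psi_x$ and $\psi''=x^4\psi_{xx}+2x^3\psi_x$, the equation $\mathcal L^0\psi=\lambda\psi$ becomes
\begin{equation*}
\tfrac12\sigma^2\psi_{xx}+\Bigl(-c+\frac{\sigma^2-a}{x}\Bigr)\psi_x-\frac{\lambda}{x^2}\psi=0 .
\end{equation*}
For $c<0$ this has an irregular singular point of rank one at $x=\infty$, with characteristic exponents $0$ and $-\kappa$, $\kappa=2|c|/\sigma^2$. By the classical asymptotic theory (Poincaré--Perron, or Levinson's theorem applied to the first-order system for $(\psi,\psi_x)$, whose perturbation is $O(x^{-2})$ and hence integrable) there are two solutions. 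The first behaves as $\psi\to1$ with $\psi=1+O(1/x)$. The second satisfies $\psi_x\sim e^{-\kappa x}x^{-2(\sigma^2-a)/\sigma^2}(1+O(1/x))$. Integrating the latter from $x$ to $\infty$ gives $\psi\sim C e^{-\kappa x}x^{-\beta}(1+O(1/x))$ with $\beta=2-2a/\sigma^2$. Returning to $u$, this is exactly \eqref{eq:psi_asympt}, and we take it as $\pup$. The rigorous $O(u)$ error term at this irregular point is where we expect most of the work. We would do it by factoring out $e^{-\kappa x}x^{-\beta}$ and writing a Volterra integral equation for the remainder on $[x_0,\infty)$, closed by a contraction argument.

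Every solution therefore has a finite limit at $0+$. At $u=\infty$ the equation is of Euler type with a regular singular point and indicial polynomial $\tfrac12\sigma^2\rho^2+(a-\tfrac12\sigma^2)\rho-\lambda$, perturbed by $c\psi'$, which is lower order. Since $\lambda>0$, the roots satisfy $\rho_-<0<\rho_+$, and Frobenius/Levinson theory gives a solution $\pdn\sim C_\downarrow v^{\rho_-}$ together with a second one growing like $v^{\rho_+}$.

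Positivity and monotonicity come from a maximum principle. At an interior point with $\psi'=0$ and $\psi>0$, the equation gives $\psi''=2\lambda\psi/(\sigma^2u^2)>0$, so a positive solution has no positive interior local maximum. For $\pup>0$ near $0$ with $\pup(0+)=0$, this forces $\pup$ to be increasing: a decrease after an increase would create such a maximum. It also forces $\pup>0$ everywhere, and $\pup$ is unbounded at infinity because it cannot tend to a positive constant without violating the equation. For $\pdn$, positive and tending to $0$ at infinity, any interval of increase would again produce a positive interior maximum, so $\pdn$ is strictly decreasing. Consequently $\pdn(0+)>0$, and it is finite by the first step. The two solutions are linearly independent, since one vanishes at $0$ and the other does not.

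Abel's formula, $\mathcal W'=-\tfrac{2(au+c)}{\sigma^2u^2}\mathcal W$, gives \eqref{eq:liouville} at once. Its sign follows from $\pup>0$, $\pup'>0$, $\pdn>0$, $\pdn'<0$, and this argument does not depend on the sign of $c$.

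For $c\ge0$, the transformed equation at $x=\infty$ has exponents $0$ and $+\kappa'$ with $\kappa'=2c/\sigma^2$. Thus one solution is bounded with a nonzero limit at $u=0$, and for $c>0$ any solution not proportional to it grows like $u^\beta e^{2c/(\sigma^2u)}$. The bounded solution is positive and increasing by the same maximum principle, and we call it $\pup$. The decaying solution $\pdn$ is then not proportional to $\pup$, so it must blow up at $0$; being positive and decreasing, it satisfies $\pdn(0+)=+\infty$. For $c=0$ we verify $u^{\rho_\pm}$ by direct substitution. The sign of the Wronskian is again read off from monotonicity.
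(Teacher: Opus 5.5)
Your route is essentially the paper's. The substitution $x=1/u$ (the paper's $t=1/u$) turns $0$ into an irregular singular point of rank one whose leading matrix has the distinct roots $0$ and $2c/\sigma^2$. The behaviour at infinity is read off from the indicial polynomial, and positivity, monotonicity and the sign of $\mathcal W$ come from the same observation (critical points of positive solutions are strict minima) together with Abel's formula. Your two local shortcuts are fine and somewhat neater than the paper's: Frobenius at the regular singular point $u=\infty$ instead of $x=\ln u$ plus Levinson, and $0<\pdn(0+)<\infty$ from ``every solution has a finite limit at $0+$'' plus monotonicity, instead of the decomposition $\pdn=A\chi+B\pup$ with $A\ne0$.

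The justification at $x=\infty$, however, is wrong as stated. The system for $(\psi,\psi_x)$ has matrix $A_0+x^{-1}A_1+x^{-2}A_2$ with $A_0=\begin{pmatrix}0&1\\0&2c/\sigma^2\end{pmatrix}$ and $A_1=\mathrm{diag}(0,-\beta)$. So the perturbation of $A_0$ is $O(1/x)$, not $O(x^{-2})$, and it is not integrable unless $a=\sigma^2$. This term is exactly what produces the factor $x^{-\beta}$ in your second solution; if the perturbation really were integrable, Levinson's theorem would give $e^{-\kappa x}(1,-\kappa)^\top(1+o(1))$ with no power factor. You need one of two repairs. One is Levinson's theorem in the form where, after a preliminary transformation diagonalizing $A_0+x^{-1}A_1$ up to an $O(x^{-2})$ remainder, the $O(1/x)$ diagonal terms are kept in the exponent. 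The other, as in the paper, is the theorem on irregular singular points of rank one (Coddington--Levinson, Ch.~V, Thm.~4.1). That theorem also gives the asymptotic expansion, hence the $1+O(u)$ error, so your separate Volterra contraction step becomes unnecessary.

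For $c>0$, the maximum principle alone does not make the bounded solution $\chi$ increasing. It only forbids positive interior maxima, and a solution with $\chi(0+)>0$ could a priori start by decreasing. You need an extra input. One option is $c\,\chi'(0+)=\lambda\chi(0+)>0$, obtained by letting $u\to0+$ in the equation with the differentiated expansion. Another is the elementary fact that a positive solution decreasing near zero satisfies $\tfrac12\sigma^2u^2\psi''=\lambda\psi-(au+c)\psi'\ge\lambda\psi$, whence $\psi''\ge C/u^2$ and $\psi(0+)=+\infty$. Three of your conclusions rest on this step: that $\pdn$ is not proportional to $\pup$, that $\pdn$ therefore blows up at zero, and the sign of $\mathcal W$ for $c>0$.

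The statement also asserts that zero is inaccessible for $c\ge0$, which you never address. The paper gets it from the explicit solution $Y_t=\mathcal E_t\bigl(u+c\int_0^t\mathcal E_s^{-1}\diff s\bigr)\ge u\mathcal E_t>0$.

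Finally, you assume rather than prove that $\pdn$ is positive on the whole half-line. It follows from the same maximum principle: to the right of a last zero, $\pdn$ would have a positive interior maximum.
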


\begin{proof}
The point $u=0$ is an irregular singular point of rank $1$. Substituting
$\psi(u)=e^{h(u)}$ we obtain for $h'$ the Riccati equation
$$
\tfrac12\sigma^2u^2\bigl(h''+(h')^2\bigr)+(au+c)h'-\lambda=0 .
$$
Inserting $h'(u)=K/u^2+B/u+O(1)$ gives, at the orders $u^{-2}$ and $u^{-1}$,
$$
\tfrac12\sigma^2K^2+cK=0,\qquad \sigma^2K(B-1)+aK+cB=0 .
$$
The first equation yields $K=0$ or $K=-2c/\sigma^2=\kappa>0$. For $K=\kappa$,
that is, for $c=-\sigma^2K/2$, the second one gives
$\sigma^2B/2+a-\sigma^2=0$, whence $B=\beta$. Integration gives
$h(u)=-\kappa/u+\beta\ln u+O(u)$ and the asymptotics \eqref{eq:psi_asympt}. The
existence of solutions with such behaviour is guaranteed by the theory of
singular points of the second kind: the substitution $t=1/u$ transforms the
equation into the system
$$
    \dot y=\bigl(A_0+A_1t^{-1}+A_2t^{-2}\bigr)y ,
    \qquad
    A_0=\begin{pmatrix} 0 & 1\\ 0 & 2c/\sigma^2\end{pmatrix} ,
$$
the characteristic roots of $A_0$ being $0$ and $2c/\sigma^2$, which are
distinct for $c\ne0$, so that Theorem~4.1 of Chapter~V in \cite{Coddington}
applies. The branch $K=0$ corresponds to a solution $\chi$ with the asymptotic
expansion $\chi(u)=b_0+b_1u+\dots$, whose leading coefficient $b_0$ is non-zero
because in that theorem the leading vector of a formal solution is a
characteristic vector of $A_0$. The pair $\{\chi,\pup\}$ is linearly
independent, since $\pup(0+)=0\ne b_0$; hence in the expansion
$\pdn=A\chi+B\pup$ the coefficient $A$ is non-zero, for otherwise $\pdn$ would
be proportional to $\pup$, contradicting $\mathcal W\ne0$. Therefore
$\pdn(0+)=Ab_0$ is finite and non-zero. The behaviour at infinity is obtained
by the substitution $x=\ln u$, which turns the equation into a system
$y'=\bigl(A+R(x)\bigr)y$ with a constant matrix $A$ whose characteristic roots
are the roots of the polynomial above and with a perturbation $R(x)=O(e^{-x})$
integrable on $[x_0,\infty)$. The roots are distinct and of opposite signs,
since their product equals $-2\lambda/\sigma^2<0$, so that Theorem~8.1 of
Chapter~III in \cite{Coddington} applies.

Let now $\psi>0$ and $\psi'(u_1)=0$. Then
$\sigma^2u_1^2\psi''(u_1)/2=\lambda\psi(u_1)>0$, that is, every critical point
of a positive solution is a strict local minimum, and a positive solution has
no local maxima. Hence $\pup$ is strictly increasing and $\pdn$ strictly
decreasing, which gives $\mathcal W<0$; equality \eqref{eq:liouville} follows
by integrating $\mathcal W'=-2(au+c)\mathcal W/(\sigma^2u^2)$ and does not
depend on the sign of $c$.

The case $c\ge0$ is treated in the same way. For $c>0$ the matrix $A_0$ above
has the roots $0$ and $2c/\sigma^2>0$, so that Theorem~4.1 of Chapter~V in
\cite{Coddington} provides a solution with a finite non-zero limit at zero and
a solution growing like $u^{\beta}e^{2c/(\sigma^2u)}$. From the explicit
representation
\begin{equation}
    \label{eq:Yexpl}
    Y_t=\mathcal E_t\Bigl(u+c\int_0^t\mathcal E_s^{-1}\diff s\Bigr) ,
    \qquad
    \mathcal E_t:=\exp\bigl((a-\sigma^2/2)t+\sigma W_t\bigr) ,
\end{equation}
one sees that $Y_t\ge u\mathcal E_t>0$, that is, zero is inaccessible. Since
$\pup$ increases and $\pdn$ decreases, they are not proportional, so exactly
one of them is bounded at zero, namely $\pup$, and $\pdn(0+)=+\infty$. For
$c=0$ the equation is of Euler type and everything is checked directly.
\qed
\end{proof}

\begin{lemma}[Continuity of the integral term]
\label{lem:convolution}
Let the function $\Phi_i$ be bounded and continuous on $(0,\infty)$ and
$$
I_i(u):=\int_{\bbr}\Phi_i(u+z)\,\Ind_{\{u+z>0\}}\,F_i(\diff z).
$$
Then $I_i$ is continuous at a point $u>0$ provided at least one of the
following conditions holds:
\begin{enumerate}
\item[\rm(i)] the support of $F_i$ is contained in $(0,\infty)$;
\item[\rm(ii)] $F_i(\{-u\})=0$;
\item[\rm(iii)] $\Phi_i(0+)=0$.
\end{enumerate}
\end{lemma}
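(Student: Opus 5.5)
The plan is to use dominated convergence. Fix $u>0$ and a sequence $u_n\to u$, with $u_n>u/2$. Write the integrand as $g_n(z):=\Phi_i(u_n+z)\Ind_{\{u_n+z>0\}}$. The bound $|g_n|\le\sup|\Phi_i|<\infty$ supplies an integrable majorant, because $F_i$ is a probability measure. It therefore suffices to show that $g_n(z)\to g(z):=\Phi_i(u+z)\Ind_{\{u+z>0\}}$ for $F_i$-almost every $z$. No regularity of $F_i$ beyond what is stated is needed, since the measure is never moved and only the integrand varies.

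For the pointwise convergence, split $\bbr$ into three sets.
\begin{enumerate}
\item[(a)] On $\{z>-u\}$ we have $u+z>0$, so for large $n$ also $u_n+z>0$. Then $g_n(z)=\Phi_i(u_n+z)\to\Phi_i(u+z)$ by continuity of $\Phi_i$ on $(0,\infty)$.
\item[(b)] On $\{z<-u\}$ both indicators vanish for large $n$, so $g_n(z)=0=g(z)$.
\item[(c)] The only problematic point is $z=-u$. There $g(-u)=0$, while $g_n(-u)=\Phi_i(u_n-u)\Ind_{\{u_n>u\}}$. This need not tend to $0$ when $u_n\downarrow u$ and $\Phi_i(0+)>0$.
\end{enumerate}

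The single exceptional point $z=-u$ is the whole difficulty, and each hypothesis disposes of it. Under (ii), $\{-u\}$ is $F_i$-null, so convergence holds $F_i$-a.e. Under (i), $F_i$ is concentrated on $(0,\infty)$ and $-u<0$, so again $F_i(\{-u\})=0$. Under (iii), $g_n(-u)\le\Phi_i(u_n-u)\Ind_{\{u_n>u\}}\to0$ because $\Phi_i(0+)=0$, so convergence holds at every $z$. In all three cases dominated convergence gives $I_i(u_n)\to I_i(u)$, hence continuity at $u$.

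There is no real obstacle. The one step requiring care is (c): one should also note that if $F_i(\{-u\})>0$ and $\Phi_i(0+)>0$, then $I_i$ jumps by $F_i(\{-u\})\Phi_i(0+)$ at $u$. This shows the conditions are sharp and anticipates Proposition~\ref{prop:counterexample}. The general Borel measurability of the integrand follows from the monotonicity of $\Phi_i$ noted in Section~\ref{s2}.
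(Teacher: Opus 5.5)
Your proof is correct and takes the same approach as the paper. Both use dominated convergence with the bounded integrand, pointwise convergence for every $z\neq -u$, and each of (i)--(iii) to dispose of the single exceptional point $z=-u$; your side remark that the jump equals $F_i(\{-u\})\Phi_i(0+)$ is also correct and is exactly what Proposition~\ref{prop:counterexample} uses.
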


\begin{proof}
Let $u_n\to u>0$. The integrand is bounded by one and, for every $z\ne-u$,
converges to $\Phi_i(u+z)\Ind_{\{u+z>0\}}$: for $z>-u$ by the continuity of
$\Phi_i$, and for $z<-u$ both sides vanish. In case {\rm(i)} the point
$z=-u<0$ does not belong to the support; in case {\rm(ii)} the set $\{-u\}$ has
$F_i$-measure zero. In case {\rm(iii)} the convergence also holds at $z=-u$,
since $\Phi_i(u_n-u)\Ind_{\{u_n>u\}}\to0=\Phi_i(0+)$. It remains to apply the
dominated convergence theorem.
\qed
\end{proof}

\begin{remark}
\label{rem:nodiff}
The identity
$$
I_i'(u)=\int_{\bbr}\Phi_i'(u+z)\,\Ind_{\{u+z>0\}}\,F_i(\diff z)
$$
is, in general, false: the indicator contributes at the points where $u+z=0$.
Below the integral term is never differentiated, only its continuity is used,
and this is why no moment assumptions on $F_i$ are needed.
\end{remark}

\section{Raising the order}
\label{s4}

We show that passing from models with one-sided jumps to the mixed model
raises the order of the equation. Consider the scalar case with exponential
jumps, starting from the IDE derived below in Section~\ref{s6}:
\begin{equation}
    \label{eq:general_scalar}
    \tfrac12\sigma^2u^2\Phi''(u)+(au+c)\Phi'(u)-\alpha\Phi(u)
    +\mathcal I(\Phi)(u)=0.
\end{equation}

\begin{proposition}
\label{prop:order}
Let the jump distributions have exponential densities. Then:
\begin{enumerate}
\item in the non-life and the annuity cases equation \eqref{eq:general_scalar}
reduces to an ODE of the third order; for the non-life case this equation is
written out in \cite[\S\,5]{AntipovKabanov};
\item in the mixed model equation \eqref{eq:general_scalar} reduces to an ODE
of the fourth order.
\end{enumerate}
\end{proposition}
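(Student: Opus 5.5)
Write the integral term explicitly for exponential densities. In the annuity case $F(\diff z)=\mu e^{-\mu z}\Ind_{\{z>0\}}\diff z$, so that
$$
\mathcal I(\Phi)(u)=\alpha\mu\int_u^\infty \Phi(y)e^{-\mu(y-u)}\diff y=:\alpha\mu\, J(u).
$$
In the non-life case $F(\diff z)=\mu e^{\mu z}\Ind_{\{z<0\}}\diff z$ and
$$
\mathcal I(\Phi)(u)=\alpha\mu\int_0^u\Phi(y)e^{-\mu(u-y)}\diff y .
$$
In the mixed case take $F(\diff z)=p\,\mu_+e^{-\mu_+z}\Ind_{\{z>0\}}\diff z+(1-p)\mu_-e^{\mu_-z}\Ind_{\{z<0\}}\diff z$ with $p\in(0,1)$. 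The integral term is then a sum $J_+(u)+J_-(u)$ of the two kernels above, with the appropriate constant factors.

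The key identity is that each such kernel term is annihilated by a first-order operator with constant coefficients. Differentiating under the variable limit gives $J'=\mu J-\Phi$ for the upward kernel, so $(D-\mu)J=-\Phi$, where $D=\diff/\diff u$. Similarly $(D+\mu)J_-=\Phi$ for the downward kernel. This is legitimate because $\Phi$ is continuous, so the variable-limit integrals are $C^1$; the $C^2$-smoothness needed for the ODE side is taken from the theorems proved later, or the reduction is understood for smooth solutions of the IDE. The differentiation concerns only the kernel with a variable limit and not the measure $F$, which is consistent with Remark~\ref{rem:nodiff}.

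For item 1, apply $D-\mu$ (annuity) or $D+\mu$ (non-life) to \eqref{eq:general_scalar}. The integral term becomes $\mp\alpha\mu\Phi$, and we obtain
$$
(D\mp\mu)\bigl[\tfrac12\sigma^2u^2\Phi''+(au+c)\Phi'-\alpha\Phi\bigr]\mp\alpha\mu\Phi=0 .
$$
This is a linear ODE of third order, whose leading coefficient $\tfrac12\sigma^2u^2$ is non-vanishing on $(0,\infty)$. Expanding it with the product rule gives the explicit form. For the non-life case we check that it coincides with \cite[\S\,5]{AntipovKabanov}.

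For item 2, apply the product $(D-\mu_+)(D+\mu_-)$. The two operators commute because both have constant coefficients. Applying $(D+\mu_-)$ to $J_+$ gives a combination of $J_+$ and $\Phi$, and then applying $(D-\mu_+)$ removes $J_+$. The symmetric computation removes $J_-$. What remains is a linear combination of $\Phi$ and $\Phi'$ added to a fourth-order differential expression with leading coefficient $\tfrac12\sigma^2u^2$, so the result is a fourth-order ODE.

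The main obstacle is to justify that the order genuinely is four, that is, that no lower-order operator eliminates both integrals. A single first-order operator $D-\gamma$ can eliminate $J_+$ only for $\gamma=\mu_+$, and $J_-$ only for $\gamma=-\mu_-$. Since $\mu_+\neq-\mu_-$ and $J_+,J_-$ are not expressible through $\Phi$ and finitely many of its derivatives, one annihilator is insufficient. I would argue this via linear independence of the exponential kernels, and state the result in the sense that the minimal annihilating operator of the kernel $p\mu_+e^{-\mu_+z}\Ind_{\{z>0\}}+(1-p)\mu_-e^{\mu_-z}\Ind_{\{z<0\}}$ has degree two.
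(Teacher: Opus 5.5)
Your proposal is correct and follows essentially the same route as the paper. The exponential kernels are annihilated by the constant-coefficient operators $\diff/\diff u\pm\mu$, which gives a third-order ODE in the one-sided cases. In the mixed case the commuting composition of two such operators removes both integrals and produces $\Phi^{(4)}$ with leading coefficient $\tfrac12\sigma^2u^2$.

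Your extra remarks on minimality of the order and on regularity go beyond what the paper does and are not needed for the statement as formulated. On regularity, note that differentiating the equation requires $\Phi\in C^3$ (resp.\ $C^4$), not just $C^2$. This follows by bootstrapping from the IDE, since the integral term is one derivative smoother than $\Phi$.
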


\begin{proof}
\textbf{1.} In the non-life case ($c>0$, downward jumps, density
$\mu e^{\mu z}$ on $z<0$) the integral term equals
$$
\mathcal I(\Phi)(u)=\alpha\mu e^{-\mu u}\int_0^u\Phi(y)e^{\mu y}\diff y .
$$
Applying the operator $\mathcal D_+:=\diff/\diff u+\mu$ we get
$$
\mathcal D_+\mathcal I(\Phi)(u)
=\alpha\mu\Bigl(-\mu e^{-\mu u}\textstyle\int_0^u\Phi(y)e^{\mu y}\diff y
+\Phi(u)\Bigr)+\mu\mathcal I(\Phi)(u)=\alpha\mu\Phi(u).
$$
The integral disappears while the order of the differential part is raised by
one, to the third. For annuities, that is, for upward jumps, the operator
$\mathcal D_-:=\diff/\diff u-\mu$ works in the same way.

\textbf{2.} In the mixed case, a sum of two integrals with parameters $\mu_1$
and $\mu_2$, it suffices to apply the composition
$\mathcal D_+(\mu_1)\mathcal D_-(\mu_2)$: the operators $\mathcal D_\pm$ have
constant coefficients and therefore commute, and the composition annihilates
both integrals. This is an operator of the second order, and applying it to the
leading term $\Phi''$ produces $\Phi^{(4)}$.
\qed
\end{proof}

Raising the order is a property of the reduction to an ODE: the method
presented below does not use it, whereas Proposition~\ref{prop:order} is
essential when asymptotics are analysed via the ODE.

\section{The case of annuity payments}
\label{s5}

Let $c_i<0$ and let the jumps be directed upwards only. Since the jumps are
positive, the process cannot cross the ruin barrier by a jump: the ruin time
$\tau$ coincides with the first passage time to zero and $X_\tau=0$.

We apply the Green's function method: between the events the process behaves
like a diffusion. Let $Y=Y^{u,i}$ be the solution of
$\diff Y=(a_iY+c_i)\diff t+\sigma_iY\diff W$ with $Y_0=u>0$, describing the
evolution of the capital in the absence of events, let
$\zeta:=\inf\{t>0:\ Y_t\le0\}$, and let $T$ be the time of the first event,
that is, of a jump of $P$ or of a switch of the environment. By the
independence of $W$, $\pi$ and $\theta$ the time $T$ does not depend on $W$ and
is exponentially distributed with parameter $\tilde\lambda_i$, the event being a
switch to $j$ with probability $\lambda_{ij}/\tilde\lambda_i$ and a jump with
probability $\alpha_i/\tilde\lambda_i$.

By Lemma~\ref{lem:asymptotics} the homogeneous equation
$\mathcal L^0_i\psi-\tilde\lambda_i\psi=0$ has a solution $\psi_{i,\uparrow}$
vanishing at zero and a solution $\psi_{i,\downarrow}$ bounded at infinity. We
build from them the Green's function
\begin{equation}
    \label{eq:green}
    G_i(u,v)=\frac{2}{\sigma_i^2v^2|\mathcal W_i(v)|}\times
    \begin{cases}
    \psi_{i,\uparrow}(u)\psi_{i,\downarrow}(v), & 0<u\le v, \\
    \psi_{i,\uparrow}(v)\psi_{i,\downarrow}(u), & u>v.
    \end{cases}
\end{equation}
By Lemma~\ref{lem:asymptotics} the Wronskian $\mathcal W_i$ is strictly
negative, in particular it does not vanish, and the kernel \eqref{eq:green} is
positive. From \eqref{eq:psi_asympt} and \eqref{eq:liouville} one sees that in
the ratio $\psi_{i,\uparrow}(v)/\bigl(v^2|\mathcal W_i(v)|\bigr)$ the
exponentials cancel, since $e^{2c_i/(\sigma_i^2v)}=e^{-\kappa_i/v}$, and so do
the powers, since $\beta_i+2a_i/\sigma_i^2=2$. Hence the limit as $v\to0$ is
finite. As $v\to\infty$ we have
$G_i(u,v)=O\bigl(v^{\rho_--2+2a_i/\sigma_i^2}\bigr)$, and integrability is
equivalent to $\rho_-<\rho^*$, where $\rho^*:=1-2a_i/\sigma_i^2$. This holds
automatically: substituting $\rho^*$ into the characteristic equation gives the
value $-\tilde\lambda_i<0$, and the parabola is negative exactly between its
roots. Thus the Green's function is integrable in $v$, uniformly on compact
subsets of $(0,\infty)$. Put
$$
    \mathcal K_i[f](u):=\int_0^\infty G_i(u,v)\,f(v)\,\diff v .
$$

\begin{lemma}[Exact integral representation]
\label{lem:repr}
For all $i\in E$ and $u>0$ the identity
$\Phi_i=\mathcal K_i\bigl[\mathcal H_i(\Phi)\bigr]$ holds, no smoothness of
$\Phi$ being assumed.
\end{lemma}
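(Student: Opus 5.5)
The proof will combine a first-step decomposition at the first event time $T$ with the identification of the Green's function as the density of the killed occupation measure of $Y$.

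\textbf{Step 1: renewal at the first event.} Fix $i$ and $u>0$. Before $T$ the capital coincides with $Y=Y^{u,i}$. Since $\Phi_j\equiv0$ is impossible only if ruin has not occurred, survival requires $\zeta>T$; ruin by a jump is excluded because the jumps are positive. At time $T$ the event is a switch to $j$ with probability $\lambda_{ij}/\tilde\lambda_i$, after which the capital is $Y_T$. Otherwise it is a jump of size $z\sim F_i$, after which the capital is $Y_T+z$. The strong Markov property of the triple $(X,\theta,\pi)$ at $T$, together with the independence of $T$, the jump mark and $W$, gives
$$
\Phi_i(u)=\bbe\Bigl[\Ind_{\{T<\zeta\}}\tfrac{1}{\tilde\lambda_i}\mathcal H_i(\Phi)(Y_T)\Bigr]
=\bbe\int_0^{\zeta}e^{-\tilde\lambda_i t}\,\mathcal H_i(\Phi)(Y_t)\,\diff t .
$$
The second equality integrates out $T\sim\mathrm{Exp}(\tilde\lambda_i)$ by Fubini. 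On $\{\zeta<\infty\}$ we have $Y_\zeta=0$ and the survival probability is $0$, so nothing is lost. Only measurability and boundedness of $\Phi$ are used here, and both follow from monotonicity.

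\textbf{Step 2: resolvent equals Green operator.} It remains to show that for bounded Borel $f\ge0$,
$$
R f(u):=\bbe_u\int_0^{\zeta}e^{-\tilde\lambda_i t}f(Y_t)\,\diff t=\mathcal K_i[f](u).
$$
By a monotone class argument it suffices to treat continuous $f$ with compact support in $(0,\infty)$. For such $f$, the function $w=\mathcal K_i[f]$ is $C^2$ on $(0,\infty)$. Variation of constants, using the normalisation $2/(\sigma_i^2v^2|\mathcal W_i|)$, gives $(\mathcal L_i^0-\tilde\lambda_i)w=-f$. Moreover $w$ is bounded, with $w(0+)=0$ coming from $\psi_{i,\uparrow}(0+)=0$ and $w$ bounded at infinity coming from $\psi_{i,\downarrow}$.

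Apply Itô's formula to $e^{-\tilde\lambda_i t}w(Y_t)$ up to $\zeta\wedge\tau_n\wedge t$, where $\tau_n$ is the exit time from $[1/n,n]$. This gives
$$
w(u)=\bbe\int_0^{\zeta\wedge\tau_n\wedge t}e^{-\tilde\lambda_i s}f(Y_s)\,\diff s+\bbe\,e^{-\tilde\lambda_i(\zeta\wedge\tau_n\wedge t)}w(Y_{\zeta\wedge\tau_n\wedge t}).
$$
Let $t\to\infty$, then $n\to\infty$. The boundary term tends to $0$: on $\{\zeta<\infty\}$ it tends to $w(0+)=0$, and otherwise the factor $e^{-\tilde\lambda_i t}$ kills it because $w$ is bounded. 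Here $\tau_n\uparrow\zeta$ because $Y$ does not explode.

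Finally apply the identity with $f=\mathcal H_i(\Phi)$, which is bounded by $\tilde\lambda_i$ and Borel. Monotone convergence extends it from compactly supported $f$. The integrability of $G_i(u,\cdot)$ established above guarantees that $\mathcal K_i[f]$ is finite.

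\textbf{Main obstacle.} The hardest point is the boundary behaviour near $0$: justifying that $w(Y_{\tau_n})\to0$ when $Y$ approaches $0$. This requires $w(0+)=0$ for $w=\mathcal K_i[f]$ with $f$ merely bounded, not compactly supported, when we pass to the limit. I would first prove the identity for compactly supported $f$, where $w=C\psi_{i,\uparrow}$ near $0$, so $w(0+)=0$ trivially. I would then extend to general bounded $f$ via monotone convergence on both sides, avoiding any need for boundary analysis of $\mathcal K_i[f]$ for general $f$.
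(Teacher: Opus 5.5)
Your proof is correct. Step~1 coincides with the paper's argument: the first-step decomposition at $T$, the strong Markov property, and Fubini over $T\sim\mathrm{Exp}(\tilde\lambda_i)$, which give $\Phi_i(u)=\bbe\int_0^{\zeta}e^{-\tilde\lambda_i t}\mathcal H_i(\Phi)(Y_t)\diff t$.

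The two proofs differ in Step~2. The paper identifies this resolvent with $\mathcal K_i$ by quoting the classical Green's function of a killed one-dimensional diffusion from Borodin--Salminen. You instead verify the identity directly, in three moves:
\begin{enumerate}
\item For continuous compactly supported $f$, the function $w=\mathcal K_i[f]$ is a multiple of $\psi_{i,\uparrow}$ near $0$ and of $\psi_{i,\downarrow}$ near $\infty$. Hence it is bounded with $w(0+)=0$, and it solves $(\mathcal L_i^0-\tilde\lambda_i)w=-f$.
\item It\^o's formula with the localisation $\tau_n\uparrow\zeta$ then gives $w=Rf$.
\item A monotone class argument extends this to bounded Borel $f$. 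This works because both sides are finite measures in $f$ on $(0,\infty)$, using $R1\le1/\tilde\lambda_i$ and $\mathcal K_i[1]<\infty$.
\end{enumerate}

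What each route buys: the citation is shorter, but it implicitly requires matching the book's normalisation (speed measure and the scale Wronskian) with the factor $2/(\sigma_i^2v^2|\mathcal W_i(v)|)$. Your verification checks that normalisation automatically and makes explicit which boundary conditions are used. It also carries over verbatim to the case $c_i\ge0$ of Section~6. There $\zeta=\infty$, so $\tau_n\to\infty$, and the boundary term is killed by $e^{-\tilde\lambda_i\tau_n}$ even though $w(0+)$ may be non-zero.

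Two small points of precision:
\begin{itemize}
\item In your double limit, on $\{\zeta=\infty\}$ it is $e^{-\tilde\lambda_i\tau_n}$ with $\tau_n\to\infty$ that kills the boundary term, not $e^{-\tilde\lambda_i t}$.
\item The garbled sentence beginning ``Since $\Phi_j\equiv0$ is impossible\dots'' should simply say that on $\{\zeta\le T\}$ the capital reaches $0$ before any event occurs.
\end{itemize}
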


\begin{proof}
If $\zeta\le T$, ruin occurs before the first event. If $T<\zeta$, then by the
strong Markov property of the process $(X,\theta)$ at time $T$ the conditional
survival probability equals $\Phi_j(Y_T)$ in the case of a switch to $j$ and
$\Phi_i(Y_T+z)\Ind_{\{Y_T+z>0\}}$ in the case of a jump of size $z$. Averaging
over the type of the event and over $z$, and using that $X$ coincides with $Y$
on $[0,T)$, we obtain
$$
\Phi_i(u)=\frac{1}{\tilde\lambda_i}\,
\bbe\bigl[\Ind_{\{T<\zeta\}}\,\mathcal H_i(\Phi)(Y_T)\bigr] .
$$
Since $T$ does not depend on $(Y,\zeta)$ and all the quantities are bounded,
Fubini's theorem gives
$$
\Phi_i(u)=\bbe\Bigl[\int_0^{\zeta}e^{-\tilde\lambda_i t}\,
\mathcal H_i(\Phi)(Y_t)\,\diff t\Bigr],
$$
and this is exactly $\mathcal K_i[\mathcal H_i(\Phi)](u)$: the right-hand side
is the classical representation of the Green's function of a one-dimensional
diffusion killed at zero, see \cite[Part~I, Ch.~II, \S\S\,1, 4]{Borodin}.
\qed
\end{proof}

\begin{lemma}
\label{lem:LtoC}
The operator $\mathcal K_i$ maps $L^\infty(0,\infty)$ into $C^1(0,\infty)$. If,
in addition, the function $f$ is continuous, then
$\mathcal K_i[f]\in C^2(0,\infty)$ and
\begin{equation}
    \label{eq:resolvent_ode}
    \mathcal L_i^0\,\mathcal K_i[f]-\tilde\lambda_i\,\mathcal K_i[f]=-f
    \qquad \text{on } (0,\infty).
\end{equation}
\end{lemma}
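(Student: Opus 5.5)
The plan is to write the Green's function in the usual variation-of-constants form and differentiate under the integral sign by hand. Put
$$
m_i(v):=\frac{2}{\sigma_i^2v^2|\mathcal W_i(v)|}>0 .
$$
Then for $u>0$
$$
\mathcal K_i[f](u)=\psi_{i,\downarrow}(u)\int_0^u m_i(v)\psi_{i,\uparrow}(v)f(v)\diff v
+\psi_{i,\uparrow}(u)\int_u^\infty m_i(v)\psi_{i,\downarrow}(v)f(v)\diff v .
$$

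\textbf{Step 1: the integrals are absolutely convergent.} Both integrals converge absolutely for $f\in L^\infty$, and the bounds are local uniform in $u$. Near zero, the preceding discussion shows that $m_i\psi_{i,\uparrow}$ has a finite limit as $v\to0$. Near infinity, $m_i\psi_{i,\downarrow}=O\bigl(v^{\rho_--2+2a_i/\sigma_i^2}\bigr)$, which is integrable because $\rho_-<\rho^*$. On compact subsets of $(0,\infty)$ the weights $m_i\psi_{i,\uparrow}$ and $m_i\psi_{i,\downarrow}$ are continuous, since $\mathcal W_i$ does not vanish by Lemma~\ref{lem:asymptotics}.

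\textbf{Step 2: $C^1$ for bounded $f$.} Write
$$
A(u):=\int_0^u m_i\psi_{i,\uparrow}f\diff v,\qquad B(u):=\int_u^\infty m_i\psi_{i,\downarrow}f\diff v .
$$
Both are absolutely continuous, and locally Lipschitz, with
$$
A'(u)=m_i(u)\psi_{i,\uparrow}(u)f(u),\qquad B'(u)=-m_i(u)\psi_{i,\downarrow}(u)f(u)
$$
almost everywhere. Differentiating the product a.e., the two boundary terms cancel:
$$
\psi_{i,\downarrow}m_i\psi_{i,\uparrow}f-\psi_{i,\uparrow}m_i\psi_{i,\downarrow}f=0 .
$$
Hence
$$
\mathcal K_i[f]'=\psi_{i,\downarrow}'A+\psi_{i,\uparrow}'B \quad\text{a.e.}
$$
The right-hand side is continuous, and $\mathcal K_i[f]$ is absolutely continuous. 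Therefore $\mathcal K_i[f]\in C^1(0,\infty)$, and the derivative formula holds everywhere.

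\textbf{Step 3: $C^2$ when $f$ is continuous.} Now $A$ and $B$ are $C^1$, so we may differentiate $\psi_{i,\downarrow}'A+\psi_{i,\uparrow}'B$ again. This gives
$$
\mathcal K_i[f]''=\psi_{i,\downarrow}''A+\psi_{i,\uparrow}''B
+m_i f\bigl(\psi_{i,\downarrow}'\psi_{i,\uparrow}-\psi_{i,\uparrow}'\psi_{i,\downarrow}\bigr).
$$
The bracket equals $-\mathcal W_i=|\mathcal W_i|$, so the last term is $\dfrac{2f(u)}{\sigma_i^2u^2}$. The result is continuous, so $\mathcal K_i[f]\in C^2(0,\infty)$.

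\textbf{Step 4: the equation.} Apply $\mathcal L_i^0-\tilde\lambda_i$ to $\mathcal K_i[f]$. Both $\psi_{i,\uparrow}$ and $\psi_{i,\downarrow}$ solve the homogeneous equation, so the terms multiplying $A$ and $B$ drop out. What remains is $\tfrac12\sigma_i^2u^2$ times the jump term from Step 3, namely
$$
\tfrac12\sigma_i^2u^2\cdot\frac{2f(u)}{\sigma_i^2u^2}\cdot(-1)=-f(u),
$$
where the sign $-1$ comes from $|\mathcal W_i|=-\mathcal W_i$. This gives \eqref{eq:resolvent_ode}.

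\textbf{Main obstacle.} The only delicate point is Step 1, namely the convergence of the tail and near-zero integrals. Once that is in place, differentiating $u\mapsto\int_u^\infty$ needs only absolute continuity and not dominated convergence, and everything else is algebra with the Wronskian. The sign bookkeeping with $|\mathcal W_i|$ must be checked carefully to confirm that the source term comes out as $-f$ and not $+f$.
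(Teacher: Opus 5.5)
Your argument follows the paper's proof step for step. It uses the same splitting $\mathcal K_i[f]=\psi_{i,\downarrow}A+\psi_{i,\uparrow}B$, the same a.e.\ differentiation with cancellation of the cross terms, the same passage from an a.e.\ derivative to $C^1$ via absolute continuity, and the same second differentiation producing a Wronskian term. Steps 1 and 2 are correct as written.

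Steps 3 and 4, however, contain two sign errors that cancel each other, at exactly the point you flagged as needing care.

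\emph{Step 3.} The bracket $\psi_{i,\downarrow}'\psi_{i,\uparrow}-\psi_{i,\uparrow}'\psi_{i,\downarrow}$ is literally $\mathcal W_i=\psi_{i,\uparrow}\psi_{i,\downarrow}'-\psi_{i,\uparrow}'\psi_{i,\downarrow}$ from Lemma~\ref{lem:asymptotics}. So it equals $\mathcal W_i=-|\mathcal W_i|$, not $-\mathcal W_i=|\mathcal W_i|$. It is negative because $\psi_{i,\uparrow}$ and $\psi_{i,\downarrow}$ are positive, with $\psi_{i,\uparrow}$ increasing and $\psi_{i,\downarrow}$ decreasing. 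Hence the extra term in $\mathcal K_i[f]''$ is $m_if\,\mathcal W_i=-2f(u)/(\sigma_i^2u^2)$, not $+2f(u)/(\sigma_i^2u^2)$.

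\emph{Step 4.} You then insert an additional factor $(-1)$ ``from $|\mathcal W_i|=-\mathcal W_i$'', but that identity was already used in Step 3. Taken together with your Step 3 formula as stated, the algebra would give $+f$, not $-f$.

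Beyond the regularity claims, the sign of the source term is the whole content of \eqref{eq:resolvent_ode}, so this needs fixing. Correct Step 3 so that the bracket is $\mathcal W_i$ and the last term is $-2f/(\sigma_i^2u^2)$, and remove the extra $(-1)$ in Step 4. Then $\tfrac12\sigma_i^2u^2\cdot\bigl(-2f/(\sigma_i^2u^2)\bigr)=-f$ directly, as in the paper.
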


\begin{proof}
Write $y=\mathcal K_i[f]=\psi_{i,\downarrow}A+\psi_{i,\uparrow}B$, where
\begin{gather*}
A(u)=\int_0^u\psi_{i,\uparrow}f\,m_i\diff v ,
\qquad
B(u)=\int_u^\infty\psi_{i,\downarrow}f\,m_i\diff v , \\
m_i(v):=2\bigl/\bigl(\sigma_i^2v^2|\mathcal W_i(v)|\bigr) .
\end{gather*}
Both integrals are finite by the above. The functions $A$ and $B$ are locally
Lipschitz, with $A'=\psi_{i,\uparrow}f\,m_i$ and
$B'=-\psi_{i,\downarrow}f\,m_i$ almost everywhere, so that the terms
$\pm\psi_{i,\downarrow}\psi_{i,\uparrow}f\,m_i$ arising in the differentiation
cancel identically and
$y'=\psi_{i,\downarrow}'A+\psi_{i,\uparrow}'B$ almost everywhere. The function
$y$ is absolutely continuous and the right-hand side is continuous, hence the
identity holds everywhere and $y\in C^1(0,\infty)$. Differentiating once more
at the points of continuity of $f$ we obtain
$$
y''=\psi_{i,\downarrow}''A+\psi_{i,\uparrow}''B
+\bigl[\psi_{i,\downarrow}'\psi_{i,\uparrow}
-\psi_{i,\uparrow}'\psi_{i,\downarrow}\bigr]f\,m_i ,
$$
where the bracket equals $\mathcal W_i=-|\mathcal W_i|$, so that the last term
equals $-2f(u)/(\sigma_i^2u^2)$. Substituting this into
$\mathcal L^0_iy-\tilde\lambda_iy$ and using that $\psi_{i,\uparrow}$ and
$\psi_{i,\downarrow}$ solve the homogeneous equation, we obtain
\eqref{eq:resolvent_ode}; the continuity of $y''$ follows from that of $f$.
\qed
\end{proof}

\begin{theorem}
\label{thm:annuity_case}
Let $\sigma_i>0$, $c_i<0$ and let $F_i$ be an arbitrary probability measure on
$(0,\infty)$ for every regime $i\in E$. Then the survival probability
$\Phi_i(u)$ belongs to $C^2(0,\infty)$.
\end{theorem}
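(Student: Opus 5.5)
The proof is a bootstrap argument built on the exact representation of Lemma~\ref{lem:repr} together with the regularity gain of Lemma~\ref{lem:LtoC}.

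In the first step I use only that each $\Phi_j$ is a Borel function with values in $[0,1]$. Since $0\le\mathcal H_i(\Phi)\le\tilde\lambda_i$, the function $\mathcal H_i(\Phi)$ lies in $L^\infty(0,\infty)$. Lemma~\ref{lem:repr} gives $\Phi_i=\mathcal K_i[\mathcal H_i(\Phi)]$, and the first part of Lemma~\ref{lem:LtoC} then shows that $\Phi_i\in C^1(0,\infty)$. This holds for every $i\in E$. In particular every $\Phi_i$ is continuous and bounded on $(0,\infty)$; no a priori continuity is needed, because the representation holds pointwise for all $u>0$.

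In the second step I show that $\mathcal H_i(\Phi)$ is continuous on $(0,\infty)$. The switching part $\sum_{j\ne i}\lambda_{ij}\Phi_j$ is a finite combination of continuous functions, so it is continuous. The jump part is $\alpha_i I_i(u)$, with $I_i$ as in Lemma~\ref{lem:convolution}. Since $F_i$ is a probability measure on $(0,\infty)$, condition (i) of that lemma holds. Indeed, for $u>0$ and $z>0$ the indicator equals one, and $\Phi_i(u_n+z)\to\Phi_i(u+z)$ by continuity, so dominated convergence applies. Hence $I_i$ is continuous at every $u>0$, with no atomic or moment assumption on $F_i$, and therefore $\mathcal H_i(\Phi)\in C(0,\infty)\cap L^\infty$.

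In the third step I apply the second part of Lemma~\ref{lem:LtoC} to $f=\mathcal H_i(\Phi)$, which is now continuous. This gives $\Phi_i=\mathcal K_i[f]\in C^2(0,\infty)$, and as a by-product the equation $\mathcal L_i^0\Phi_i-\tilde\lambda_i\Phi_i+\mathcal H_i(\Phi)=0$ holds on $(0,\infty)$.

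The only delicate point is the first step: whether Lemma~\ref{lem:LtoC} really applies to an arbitrary bounded Borel $f$. This needs the uniform local integrability of $G_i(u,\cdot)$ near $v=0$ and $v=\infty$, which was established before the definition of $\mathcal K_i$, so that $A$ and $B$ are finite and locally Lipschitz. Once this is accepted, the argument is purely structural. It also explains why the annuity case needs nothing of $F_i$: the integral term is only required to be continuous, and in this case that comes for free from condition (i).
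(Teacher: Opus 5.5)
Your proposal is correct and follows the paper's proof essentially step for step. Boundedness of $\mathcal H_i(\Phi)$ together with Lemma~\ref{lem:repr} and the first part of Lemma~\ref{lem:LtoC} gives $C^1$. Lemma~\ref{lem:convolution}(i) then gives continuity of $\mathcal H_i(\Phi)$, and the second part of Lemma~\ref{lem:LtoC} gives $C^2$.
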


\begin{proof}
By Lemma~\ref{lem:repr}, $\Phi_i=\mathcal K_i[\mathcal H_i(\Phi)]$. Since
$\Phi_i$ is bounded, being a probability, and the $F_i$ are probability
measures, the function $\mathcal H_i(\Phi)$ is bounded as well, and
Lemma~\ref{lem:LtoC} gives $\Phi_i\in C^1(0,\infty)$; in particular all
$\Phi_j$ are continuous.

Since the support of $F_i$ is contained in $(0,\infty)$, the integral term is
continuous by Lemma~\ref{lem:convolution}{\rm(i)}, and so is
$\mathcal H_i(\Phi)$. The second part of Lemma~\ref{lem:LtoC} yields
$\Phi_i\in C^2(0,\infty)$.
\qed
\end{proof}

\section{The mixed case}
\label{s6}

In this section the sign of the drift is arbitrary and
Lemma~\ref{lem:asymptotics} covers all three cases. For $c_i\ge0$ zero is
inaccessible, so $\zeta=\infty$, and for $\psi_{i,\uparrow}$ one takes the
solution bounded at zero. Let us check that the kernel \eqref{eq:green} is
still integrable at zero. For $c_i>0$ the numerator $\psi_{i,\uparrow}(v)$
tends to $\psi_{i,\uparrow}(0+)>0$ while $|\mathcal W_i(v)|$ grows like
$e^{2c_i/(\sigma_i^2v)}$, so that the ratio
$\psi_{i,\uparrow}(v)/\bigl(v^2|\mathcal W_i(v)|\bigr)$ tends to zero. For
$c_i=0$ this ratio equals $v^{\rho_+-2+2a_i/\sigma_i^2}/|C_{\mathcal W}|$ and
is integrable at zero, since $\rho_+>\rho^*$. The behaviour at infinity does
not depend on the sign of $c_i$. Therefore Lemmas~\ref{lem:repr} and
\ref{lem:LtoC} remain valid and, in particular, $\Phi_i\in C^1(0,\infty)$ for
all $c_i$ and $F_i$. In contrast with the annuity case, going further requires
additional conditions.

\begin{theorem}
\label{thm:mixed_case}
Assume that in the mixed model $\sigma_i>0$ and that for every regime $i\in E$
at least one of the following conditions holds:
{\rm(A)} the measure $F_i$ has no atoms on $(-\infty,0)$;
{\rm(B)} $\Phi_i(0+)=0$.
Then the survival probability $\Phi_i(u)$ belongs to $C^2(0,\infty)$.
Condition {\rm(B)} holds, in particular, for $c_i<0$.
\end{theorem}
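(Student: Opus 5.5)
The proof will run parallel to that of Theorem~\ref{thm:annuity_case}, with the continuity of the integral term now coming from parts (ii) and (iii) of Lemma~\ref{lem:convolution} rather than part (i).

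First, by the discussion preceding the theorem, the kernel \eqref{eq:green} remains integrable in $v$ near zero and at infinity for every sign of $c_i$. Hence Lemma~\ref{lem:repr} gives $\Phi_i=\mathcal K_i[\mathcal H_i(\Phi)]$ for all $i\in E$. For $c_i\ge0$ we have $\zeta=\infty$, and the Feynman--Kac representation with the bounded-at-zero solution $\psi_{i,\uparrow}$ is the Green's function of the resolvent of a diffusion with an inaccessible boundary at zero. Since $0\le\mathcal H_i(\Phi)\le\tilde\lambda_i$, the first part of Lemma~\ref{lem:LtoC} yields $\Phi_i\in C^1(0,\infty)$ for every $i$. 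In particular each $\Phi_j$ is bounded and continuous on $(0,\infty)$.

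Next we show that $\mathcal H_i(\Phi)$ is continuous on $(0,\infty)$. The switching part $\sum_{j\ne i}\lambda_{ij}\Phi_j$ is continuous by the previous step. For the integral term $I_i$, fix $u>0$:
\begin{itemize}
\item if (A) holds for regime $i$, then $F_i(\{-u\})=0$ because $-u<0$, and Lemma~\ref{lem:convolution}(ii) gives continuity of $I_i$ at $u$;
\item if (B) holds, Lemma~\ref{lem:convolution}(iii) gives continuity at $u$ directly.
\end{itemize}
Since the condition is imposed per regime and only the $i$-th integral enters $\mathcal H_i$, this covers every $i$. The second part of Lemma~\ref{lem:LtoC} then gives $\Phi_i\in C^2(0,\infty)$ together with \eqref{eq:resolvent_ode}.

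It remains to justify that (B) holds for $c_i<0$. This is the step needing real care, since $X$ can jump upward before reaching zero. The plan is to use the representation once more:
$$
\Phi_i(u)=\int_0^\infty G_i(u,v)\,\mathcal H_i(\Phi)(v)\,\diff v\le\tilde\lambda_i\int_0^\infty G_i(u,v)\,\diff v .
$$
The right-hand side equals $\tilde\lambda_i\,\mathcal K_i[1](u)=\bbe\int_0^\zeta e^{-\tilde\lambda_i t}\,\tilde\lambda_i\diff t=1-\bbe e^{-\tilde\lambda_i\zeta}$, where $\zeta$ is the hitting time of zero by $Y^{u,i}$. Since $c_i<0$, the drift $a_iY+c_i$ is strictly negative near zero while the diffusion coefficient $\sigma_iY$ degenerates there, so $\zeta\to0$ in probability as $u\to0+$. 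Hence $\bbe e^{-\tilde\lambda_i\zeta}\to1$ and $\Phi_i(0+)=0$.

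An alternative for this last step is to argue analytically: $\psi_{i,\uparrow}(u)\to0$ while $A(u)\to0$ and $B(u)$ stays bounded in the decomposition $y=\psi_{i,\downarrow}A+\psi_{i,\uparrow}B$ from the proof of Lemma~\ref{lem:LtoC}. Both terms then vanish at zero, using $\psi_{i,\downarrow}(0+)$ finite from Lemma~\ref{lem:asymptotics} together with the finiteness of $\psi_{i,\uparrow}m_i$ near zero.
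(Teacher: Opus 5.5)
Your main line is the paper's proof. You get $C^1$ from Lemmas~\ref{lem:repr} and \ref{lem:LtoC} and continuity of $\mathcal H_i(\Phi)$ from Lemma~\ref{lem:convolution}{\rm(ii)} or {\rm(iii)}, and for $c_i<0$ you use the same bound $\Phi_i(u)\le\tilde\lambda_i\mathcal K_i[1](u)=\bbp(T<\zeta)=1-\bbe e^{-\tilde\lambda_i\zeta}$.

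The only difference is the final limit. The paper uses $\bbe e^{-\tilde\lambda_i\zeta}=\psi_{i,\downarrow}(u)/\psi_{i,\downarrow}(0+)$, whereas your claim that $\zeta\to0$ in probability rests on a heuristic about the drift and the degenerate diffusion coefficient. It becomes rigorous via \eqref{eq:Yexpl}, which for $c_i<0$ gives $\zeta^u=\inf\{t:\int_0^t\mathcal E_s^{-1}\diff s\ge u/|c_i|\}$. This decreases to $0$ a.s. as $u\downarrow0$ and needs neither the hitting-time formula nor the behaviour of $\psi_{i,\downarrow}$ at zero.

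Discard the analytic alternative, however, because its key claim is false. $B(u)$ is in general unbounded as $u\to0+$: by \eqref{eq:liouville}, $m_i(v)$ is proportional to $v^{2a_i/\sigma_i^2-2}e^{\kappa_i/v}$, and $\psi_{i,\downarrow}(0+)\ne0$, so $\psi_{i,\downarrow}m_i$ is not integrable at zero. Hence $B(u)\to\infty$ whenever $\mathcal H_i(\Phi)$ is bounded away from zero near the origin.

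The product $\psi_{i,\uparrow}(u)B(u)$ does still tend to zero. The part of the integral over $(u,1)$ is of order $u^{2a_i/\sigma_i^2}e^{\kappa_i/u}$; against $\psi_{i,\uparrow}(u)$ the exponentials cancel and $\beta_i+2a_i/\sigma_i^2=2$ leaves $O(u^2)$, while the part over $(1,\infty)$ is a constant times $\psi_{i,\uparrow}(u)\to0$. That conclusion needs this Laplace-type estimate based on \eqref{eq:psi_asympt}, not boundedness of $B$.
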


\begin{proof}
As noted above, $\Phi_i\in C^1(0,\infty)$, so all $\Phi_j$ are continuous and
bounded. If {\rm(A)} holds, then $F_i(\{-u\})=0$ for all $u>0$, that is,
condition {\rm(ii)} of Lemma~\ref{lem:convolution} is satisfied; if {\rm(B)}
holds, then condition {\rm(iii)} is satisfied. In both cases the integral term,
and with it $\mathcal H_i(\Phi)$, is continuous, and the second part of
Lemma~\ref{lem:LtoC} gives $\Phi_i\in C^2(0,\infty)$.

Let $c_i<0$. Survival requires an event before the diffusion reaches zero,
hence
$$
\Phi_i(u)\le\bbp(T<\zeta)=1-\bbe\bigl[e^{-\tilde\lambda_i\zeta}\bigr]
=1-\frac{\psi_{i,\downarrow}(u)}{\psi_{i,\downarrow}(0+)}
\;\xrightarrow[u\to0+]{}\;0 .
$$
Thus {\rm(B)} holds.
\qed
\end{proof}

\begin{proposition}[Failure of smoothness at an atom]
\label{prop:counterexample}
Let $K=1$, $\sigma>0$, $\alpha>0$, let the measure $F$ have an atom at a point
$-b<0$ of weight $\varkappa:=F(\{-b\})>0$ and no other atoms in some punctured
neighbourhood of $-b$, and let $\Phi(0+)>0$. Then $\Phi\in C^1(0,\infty)$, the
second derivative has one-sided limits at $b$, and
\begin{equation}
    \label{eq:jump}
    \Phi''(b^+)-\Phi''(b^-)=-\frac{2\alpha\,\varkappa\,\Phi(0+)}{\sigma^2b^2}
    \;<\;0 ,
\end{equation}
in particular $\Phi\notin C^2(0,\infty)$. The condition $\Phi(0+)>0$ holds,
for instance, for $c>0$.
\end{proposition}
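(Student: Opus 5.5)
The plan is to use the representation $\Phi=\mathcal K[\mathcal H(\Phi)]$ from Lemma~\ref{lem:repr}, which by the discussion at the start of Section~\ref{s6} holds for every sign of $c$. Lemma~\ref{lem:LtoC} then gives $\Phi\in C^1(0,\infty)$, and $\Phi$ is continuous and bounded. With $K=1$ we have $\mathcal H(\Phi)(u)=\alpha I(u)$, where $I$ is the integral term of Lemma~\ref{lem:convolution}. I will split $I=I_0+I_1$. Here
$$
I_0(u)=\varkappa\,\Phi(u-b)\Ind_{\{u>b\}}
$$
is the contribution of the atom, and $I_1$ is the integral against $F-\varkappa\delta_{-b}$.

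The restricted measure $F-\varkappa\delta_{-b}$ has no atoms at $-u$ for $u$ in a neighbourhood $(b-\varepsilon,b+\varepsilon)$, because $F$ has no other atoms in a punctured neighbourhood of $-b$. By the dominated-convergence argument of Lemma~\ref{lem:convolution}(ii), $I_1$ is therefore continuous on that neighbourhood. The atom term $I_0$ is continuous on $(0,\infty)\setminus\{b\}$. At $b$ it has one-sided limits $I_0(b-)=0$ and $I_0(b+)=\varkappa\Phi(0+)$, using the continuity of $\Phi$ and the existence of $\Phi(0+)$. Hence $f:=\mathcal H(\Phi)$ is continuous on $(b-\varepsilon,b+\varepsilon)\setminus\{b\}$, with a jump at $b$:
$$
f(b+)-f(b-)=\alpha\varkappa\Phi(0+).
$$

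Next I differentiate $y=\mathcal K[f]=\psi_\downarrow A+\psi_\uparrow B$ exactly as in the proof of Lemma~\ref{lem:LtoC}. At every point $u\neq b$ near $b$, $f$ is continuous, so
$$
y''(u)=\psi_\downarrow''A+\psi_\uparrow''B-\frac{2f(u)}{\sigma^2u^2}.
$$
The terms $\psi_\downarrow''A+\psi_\uparrow''B$ are continuous at $b$, since $A$ and $B$ are continuous integrals of a bounded function. The last term has one-sided limits at $b$. Thus $\Phi''(b^\pm)$ exist, and subtracting gives
$$
\Phi''(b^+)-\Phi''(b^-)=-\frac{2}{\sigma^2b^2}\bigl(f(b+)-f(b-)\bigr)=-\frac{2\alpha\varkappa\Phi(0+)}{\sigma^2b^2}.
$$
Equivalently, one can read off the jump from the equation $\mathcal L^0y-\tilde\lambda y=-f$ on each side of $b$, since $y$ and $y'$ are continuous there. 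The difference is nonzero, so $\Phi\notin C^2(0,\infty)$.

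Finally, for $c>0$ I need $\Phi(0+)>0$. Since $\Phi$ is non-decreasing, it suffices to show $\Phi\not\equiv0$ near zero, given $\Phi\not\equiv0$. If $\Phi(0+)=0$, I would argue that from a small $u$ the process $X$ reaches a level $u_0$ with $\Phi(u_0)>0$ with positive probability without being ruined. The route is that with positive probability no events occur on $[0,t]$, while $Y$ stays positive (zero is inaccessible for $c>0$ by \eqref{eq:Yexpl}) and exceeds $u_0$ with positive probability. The strong Markov property then gives $\Phi(u)>0$ for all $u>0$, and in fact $\Phi(u)\ge p\,\Phi(u_0)$ with $p$ bounded below uniformly as $u\downarrow0$. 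The uniform lower bound comes from $Y^u\ge Y^0$, where $Y^0$ starts from $0$, stays positive for $c>0$, and reaches $u_0$ before an independent exponential time with positive probability.

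I expect this last step, the uniform positivity near zero, to be the main obstacle; the jump computation itself is routine once the atom is isolated.
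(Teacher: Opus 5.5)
Your jump computation is the same as the paper's. You split off the atom term $\alpha\varkappa\,\Phi(u-b)\Ind_{\{u>b\}}$, get continuity of the remainder from Lemma~\ref{lem:convolution}(ii), and observe that in the formula for $y''$ from Lemma~\ref{lem:LtoC} only the term $-2f/(\sigma^2u^2)$ can jump.

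You diverge from the paper in proving $\Phi(0+)>0$ for $c>0$. The paper fixes $A=\{|W_t|\le1\ \text{for all}\ t\le1\}$, on which $Y^u_1\ge L:=c\,e^{-|a-\sigma^2/2|-2\sigma}$ uniformly in $u$. This gives $\Phi(u)\ge q\,e^{-\alpha}\Phi(L)$. In a second step it shows $\Phi(L)>0$ from $Y^L_1\ge L\,\mathcal E_1$ and the full support of the lognormal law.

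You instead use the pathwise comparison $Y^u=Y^0+u\,\mathcal E\ge Y^0$ and stop when $Y^0$ first reaches $u_0$. This merges the two steps into one and gives $\Phi(u)\ge p\,\Phi(u_0)$ with $p$ independent of $u$ directly. The price is that you assert $p>0$ rather than prove it, and it needs a line. For example, on $A$ one has $Y^0_1\ge L$, and by \eqref{eq:Yexpl} $Y^0_2\ge \mathcal E_2\mathcal E_1^{-1}Y^0_1$, where $\mathcal E_2\mathcal E_1^{-1}$ is lognormal and independent of $A$. Hence $\bbp(Y^0_2\ge u_0)>0$, which is the paper's second step in another form.

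Your version also uses the strong Markov property at a hitting time, whereas the paper works at a deterministic time and needs only the simple Markov property. You can do the same by replacing the hitting time with a fixed $t$ such that $\bbp(Y^0_t\ge u_0)>0$ and intersecting with the independent event $\{T>t\}$.
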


\begin{proof}
Membership in $C^1$ was noted at the beginning of the section. The term of
$\mathcal H(\Phi)$ corresponding to the atom equals
$\alpha\varkappa\,\Phi(u-b)\Ind_{\{u>b\}}$. The remaining part corresponds to
the measure $\widetilde F:=F-\varkappa\delta_{-b}$, for which
$\widetilde F(\{-u\})=0$ for all $u$ in some neighbourhood of $b$, so that by
Lemma~\ref{lem:convolution}{\rm(ii)} it is continuous there. Hence
$\mathcal H(\Phi)$ is continuous in a punctured neighbourhood of $b$, has
one-sided limits there, and
$$
\mathcal H(\Phi)(b^+)-\mathcal H(\Phi)(b^-)=\alpha\varkappa\,\Phi(0+) .
$$
The computation in the proof of Lemma~\ref{lem:LtoC} shows that the first two
terms in the expression for $y''$ are always continuous, while the last one has
at $b$ the jump $-2\bigl(f(b^+)-f(b^-)\bigr)/(\sigma^2b^2)$; substituting
$f=\mathcal H(\Phi)$ gives \eqref{eq:jump}.

Finally, let $c>0$. Put $A:=\{|W_t|\le1\ \text{for all}\ t\le1\}$, so that
$q:=\bbp(A)>0$; the function $\mathcal E$ is defined in \eqref{eq:Yexpl}. On
the event $A$ one has
$\mathcal E_t\mathcal E_s^{-1}\ge e^{-|a-\sigma^2/2|-2\sigma}$ for all
$0\le s\le t\le1$, whence
$Y^u_1\ge L:=c\,e^{-|a-\sigma^2/2|-2\sigma}>0$ for all $u>0$, and moreover
$Y^u_t>0$ for $t\le1$. Intersecting $A$ with the event ``no event occurred
before time $1$'', which is independent of $W$ and has probability
$e^{-\alpha}>0$, and using the monotonicity of $\Phi$, we get
$\Phi(u)\ge q\,e^{-\alpha}\,\Phi(L)$ for all $u>0$. It remains to check that
$\Phi(L)>0$. By the non-triviality assumption and monotonicity there is $u_1$
with $\Phi(u_1)>0$. Since $Y^{L}_1\ge L\,\mathcal E_1$ and $\mathcal E_1$ has a
lognormal distribution with support $(0,\infty)$, the probability
$r:=\bbp(L\mathcal E_1\ge u_1)$ is positive; intersecting this event with the
absence of events before time $1$ we obtain
$\Phi(L)\ge r\,e^{-\alpha}\,\Phi(u_1)>0$.
\qed
\end{proof}

\begin{remark}
The jump \eqref{eq:jump} is non-zero if and only if $F$ has an atom on the
negative half-line and, simultaneously, $\Phi(0+)>0$. Thus the boundary of
smoothness is determined by the value $\Phi(0+)$ and not by the sign of the
drift: for $c<0$ one always has $\Phi(0+)=0$ and smoothness is preserved, for
$c>0$ one always has $\Phi(0+)>0$, and for $c=0$ both alternatives are
possible. The last statement follows from Lemma~\ref{lem:repr}. For $c=0$ zero
is inaccessible and $Y_t=u\mathcal E_t\to0$ almost surely as $u\to0+$. Since
the measure $F$ does not charge zero and $\Phi$ is continuous, the dominated
convergence theorem gives
$\mathcal H(\Phi)(y)\to\alpha\int_{(0,\infty)}\Phi\,\diff F$ as $y\to0+$,
whence for $K=1$
$$
\Phi(0+)=\int_{(0,\infty)}\Phi(z)\,F(\diff z) .
$$
Since $\Phi>0$ on $(0,\infty)$, the argument in the proof of
Proposition~\ref{prop:counterexample} being applicable for every $c\ge0$
because $Y_t\ge u\mathcal E_t$, the right-hand side is positive exactly when
$F((0,\infty))>0$. An admissible measure in the mixed model is, for example,
$$
F=\tfrac12\delta_{-b}+\tfrac12\delta_{+b} .
$$
It has finite moments of all orders.
\end{remark}

By Theorems~\ref{thm:annuity_case} and \ref{thm:mixed_case} and equality
\eqref{eq:resolvent_ode} with $f=\mathcal H_i(\Phi)$, for all regimes $i\in E$
and $u>0$ the vector of survival probabilities satisfies the system of
integro-differential equations, cf.\ \cite[Proposition~2]{AntipovKabanov},
\begin{multline*}
    \tfrac12\sigma_i^2u^2\Phi_i''(u)+(a_iu+c_i)\Phi_i'(u)
    +\sum_{j\in E}\lambda_{ij}\Phi_j(u)+ \\
    +\alpha_i\int_{-\infty}^{\infty}
    \bigl(\Phi_i(u+z)\Ind_{\{u+z>0\}}-\Phi_i(u)\bigr)F_i(\diff z)=0,
    \quad i\in E.
\end{multline*}
Here the convention $\lambda_{ii}=-\sum_{j\ne i}\lambda_{ij}$ is used, so that
the term $-\alpha_i\Phi_i$ enters exactly once, inside the integral.

\end{document}